\def\N{{\mathbb N}}
\def\R{{\mathbb R}}
\def\i{\infty}
\def\e{\varepsilon}
\def\om{\omega}
\def\p{\partial}
\def\st{\, \middle| \,}
\def\weaklyto{\rightharpoonup}
\def\BB{{\mathcal B}}
\DeclareMathOperator*{\Argmin}{Argmin}
\DeclareMathOperator*{\ch}{co}
\DeclareMathOperator*{\dom}{dom}
\DeclareMathOperator*{\interior}{int}
\DeclareMathOperator*{\Prob}{Pr}
\newtheorem{corollary}{Corollary}
\newtheorem{lemma}{Lemma}
\newtheorem{proposition}{Proposition}
\newtheorem{theorem}{Theorem}
\def\XXint#1#2#3{{\setbox0=\hbox{$#1{#2#3}{\int}$} 
		\vcenter{\hbox{$#2#3$}}\kern-.5\wd0}}
\title{Unique Minimizers and the Representation of Convex Envelopes in Locally Convex Vector Spaces}
\author{Thomas Ruf and Bernd Schmidt}
\address[Thomas Ruf]{Institut f\"ur Mathematik, Universit\"at Augsburg, 86135 Augsburg, Germany}
\email{thomas.ruf@math.uni-augsburg.de}
\address[Bernd Schmidt]{Institut f\"ur Mathematik, Universit\"at Augsburg, 86135 Augsburg, Germany}
\email{bernd.schmidt@math.uni-augsburg.de}
\begin{document}

\maketitle

\begin{abstract}

It is well known that a strictly convex minimand admits at most one minimizer. We prove a partial converse: Let $X$ be a locally convex Hausdorff space and $f \colon X \mapsto \left( - \i , \i \right]$ a function with compact sublevel sets and exhibiting some mildly superlinear growth. Then each tilted minimization problem
\begin{equation} \label{eq. minimization problem}
	\min_{x \in X} f(x) - \langle x' , x \rangle_X
\end{equation}
admits at most one minimizer as $x'$ ranges over $\dom \left( \p f^* \right)$ if and only if the biconjugate $f^{**}$ is essentially strictly convex and agrees with $f$ at all points where $f^{**}$ is subdifferentiable. We prove this via a representation formula for $f^{**}$ that might be of independent interest.

\end{abstract}

\section{Introduction}

The minimizer of a strictly convex function $f$ is unique since for any minimizers $x_0 \not = x_1$ holds
$$
f \left( \lambda x_1 + \left( 1 - \lambda \right) x_0 \right) < \lambda f \left( x_1 \right) + \left( 1 - \lambda \right) f \left( x_0 \right) = \inf f \quad \forall \, \lambda \in \left( 0, 1 \right).
$$
Using subdifferential calculus, a slightly refined uniqueness criterion may be derived requiring merely that $f$ be essentially strictly convex, i.e. proper, convex and strictly convex on each line segment contained in $\dom \left( \p f \right)$. The simplicity of these considerations tempts to conjecture that more elaborate general uniqueness criteria for minimizers might exist. As far as the inhomogeneous problem (\ref{eq. minimization problem}) is concerned, this turns out to be wrong in the following precise sense: In order to have a decent existence theory for (\ref{eq. minimization problem}), it seems reasonable to require that linear perturbations of $f \colon X \mapsto \left( - \i , \i \right]$ have compact sublevel sets. Under this condition, we shall prove that the tilted minimization problem (\ref{eq. minimization problem}) admits at most one minimizer for each $x' \in \dom \left( \p f^* \right)$ if and only if $f$ agrees with its biconjugate $f^{**}$ on $\dom \left( f^{**} \right)$, which is then essentially strictly convex. This implies $f = f^{**}$ if $X$ is a Banach space. Therefore essential strict convexity is sufficient and necessary for uniqueness of minimizers in (\ref{eq. minimization problem}). An interesting consequence is that a possible failure of uniqueness in the pertaining inhomogeneous inclusion
\begin{equation} \label{eq. gradient system}
	x' \in \p f \left( x \right)
\end{equation}
cannot be restored by employing global minimality in (\ref{eq. minimization problem}) as a selection criterion.

The essential auxiliary tool in our proof will be a representation formula for the biconjugate $f^{**}$ that we prove beforehand. As is well known, there already exist formulas relating $f^{**} \left( x \right)$ for $x \in X$ with $f$ via
$$
f^{**} \left( x \right) = \liminf_{y \to x} \inf \left\{ \sum_{k = 1}^{N} \lambda_k f \left( y_k \right) \st N \in \N, \sum_{k = 1}^{N} \lambda_k = 1, \lambda_k \ge 0, \sum_{k = 1}^{N} \lambda_k y_k = y \right\}.
$$
Our new contribution consists of identifying general sufficient conditions under which the limit may be omitted and the infimum attains.
This question has already been investigated for $X = \R^d$, where Carathéodory’s Theorem bounds the number of points that contribute meaningfully to a convex combination. Obviously, this no longer works if $\dim X = \i$. We solve this problem by permitting probability measures as continuous convex combinations. The representation result thus obtained will allow simple rigorous proofs of several intuitive relationships between $f$ and $f^{**}$, from which our main result will eventually follow. We consider it likely that the representation formula has applications beyond the present setting and therefore might be of independent interest.

We do not know a result resembling our main theorem except \cite[Thm. 1]{Adequate Functions}, where a related result is proved in the particular case  of a reflexive Banach space in its weak topology. Our method of proof differs strongly. After our main result will have been proved, we shall obtain \cite[Thm. 1]{Adequate Functions} as a corollary.
\\

\textit{Remark on notation:}
Throughout, $T$ is a topological space with Borel $\sigma$-algebra $\BB(T)$ and $\Prob(T)$ are the Borel probability measures on $T$. 
For $t \in T$, let $\delta_t$ be the Dirac-measure supported at $t$. Let $V$ be a (topological) vector space. We denote by $V'$ its topological dual space. If $M \subset V$, then $\ch M$ and $\overline{\ch} M$ are the convex hull and closed convex hull of $M$. Similarly, for a function $f \colon V \mapsto \left[ - \i , \i \right]$, $\ch f$ and $\overline{\ch} f$ are the (closed) convex envelope of $f$, i.e. the largest (lower semi-continuous) convex function below $f$. For subsets $A \subset B$ of a fixed superset $B$, we write $A^c = B \setminus A$. The symbol $\chi_A$ denotes the function with value $1$ on $A$ and $0$ elsewhere, $I_A$ is the function with value $0$ on $A$ and $\i$ on $A^c$. We denote by $\p f$ the Fenchel-Moreau subdifferential of convex analysis. Moreover
$$
\dom \left( f \right) = \left\{ v \in V \st f \left( v \right) \in \R \right\} \text{ and }
\dom \left( \p f \right) = \left\{ v \in V \st \p f \left( v \right) \not = \emptyset \right\}.
$$
We say that $f$ is essentially strictly convex iff $f$ is proper, convex everywhere and strictly convex on each convex subset of $\dom \left( \p f \right)$. This is the notion of essential strict convexity introduced in \cite{Convex Analysis}. We caution the reader that we never mean essential strict convexity in the sense of \cite{Essential in B-Spaces} unless explicitly stated.

\section{Biconjugate Representation}

In this section, we shall obtain the announced representation formula for the biconjugate. Its proof will require a lower semi-continuity result for integral functionals with respect to the weak convergence of measures, to be applied when $T$ is a non-metrizable locally convex Hausdorff space. As this result is usually only proved for when $T$ is a metric space (via Lipschitz regularization of the integrand), we first provide here a proof that works for every topological space. We refer the reader to \cite[Ch. 4]{Conv Measures} for basic definitions.

\begin{proposition} \label{prop. lsc. integrand induces lsc. functional}
	Let $\psi \colon T \mapsto \left( - \i , \i \right]$ be lower semi-continuous and bounded below. Consider the integral functional
	$$
	I_\psi \colon \Prob(T) \mapsto \left( - \i , \i \right] \colon \mu \mapsto \int \psi \, d \mu.
	$$
	If a net $\mu_\alpha \in \Prob(T)$ converges weakly to a $\mu \in \Prob(T)$ that is $\tau$-additive (e.g. is Radon), then
	$$
	\liminf_\alpha I_\psi \left( \mu_\alpha \right) \ge I_\psi \left( \mu \right).
	$$
\end{proposition}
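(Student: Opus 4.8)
The plan is to realize $I_\psi$ as a supremum of weakly continuous functionals and to show this supremum is exact at the $\tau$-additive limit. First I would reduce to $\psi \ge 0$: subtracting the finite constant $\inf_T \psi$ changes both sides of the claimed inequality by the same amount, since each $\mu_\alpha$ and $\mu$ has total mass $1$. Writing $\mathcal F = \{ f \in C_b(T) : 0 \le f \le \psi \}$ for the bounded continuous minorants of $\psi$, I note that $0 \in \mathcal F$ and that $\mathcal F$ is stable under finite maxima, hence upward directed. For every $f \in \mathcal F$ weak convergence gives $\int f \, d\mu_\alpha \to \int f \, d\mu$, while $f \le \psi$ gives $\int f \, d\mu_\alpha \le I_\psi(\mu_\alpha)$; taking $\liminf_\alpha$ and then the supremum over $\mathcal F$ yields
$$
\liminf_\alpha I_\psi(\mu_\alpha) \ge \sup_{f \in \mathcal F} \int f \, d\mu .
$$
This half is soft: it uses only the definition of weak convergence and exhibits the right-hand side as a weakly lower semi-continuous lower bound, with no regularity of $T$ or of the $\mu_\alpha$ entering.

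It then remains to match $\sup_{f\in\mathcal F}\int f\,d\mu = I_\psi(\mu)$, and this exact representation of the integral against the limit measure is the heart of the matter. I would prove it in two moves. First, a pointwise approximation: for fixed $x_0$ and any $0 \le c < \psi(x_0)$, the superlevel set $\{\psi > c\}$ is open (as $\psi$ is lower semi-continuous) and contains $x_0$, so a continuous Urysohn-type function $g \colon T \to [0,1]$ with $g(x_0)=1$ vanishing off $\{\psi > c\}$ produces $f = c\,g \in \mathcal F$ with $f(x_0) = c$; letting $c \uparrow \psi(x_0)$ shows $\psi = \sup_{f\in\mathcal F} f$ pointwise. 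Second, an interchange of this directed supremum with the integral: since $\mathcal F$ is directed with lower semi-continuous pointwise supremum $\psi$, the $\tau$-additivity of $\mu$ is exactly the $\tau$-smoothness needed to conclude
$$
\int \psi \, d\mu = \sup_{f\in\mathcal F}\int f\,d\mu .
$$

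I expect the approximation move to be the main obstacle, and it is where the argument genuinely departs from the metric case. Over a metric space one simply takes the Moreau--Yosida regularizations $\psi_n = \inf_y [\psi(y) + n\,d(\cdot,y)]$, which increase to $\psi$ along a \emph{sequence}, so that monotone convergence suffices and no regularity of $\mu$ is needed. Without a metric, $\psi$ is only a directed supremum of a possibly uncountable net of continuous minorants, so two ingredients must be supplied: enough continuous functions to build the minorants -- this rests on a separation property of $T$ and is automatic on the locally convex Hausdorff spaces of the intended application, which are completely regular -- and, in place of monotone convergence, the $\tau$-smoothness of the integral, which is precisely why the limit $\mu$ (but none of the $\mu_\alpha$) must be assumed $\tau$-additive. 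This matches the familiar portmanteau principle that only the limit measure need be well behaved.
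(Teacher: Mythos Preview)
Your overall strategy---realize $I_\psi$ as a supremum of functionals enjoying the desired lower semicontinuity, then show the supremum is exact at the $\tau$-additive limit---matches the paper's. The essential divergence is in the approximating class. You take bounded \emph{continuous} minorants $\mathcal F\subset C_b(T)$, and your Urysohn step explicitly needs enough continuous functions to separate a point from a closed set; as you yourself note, this is a complete-regularity assumption on $T$. But the proposition is stated for an arbitrary topological space $T$, and the paragraph introducing it stresses that the point is precisely to give a proof ``that works for every topological space''. So your argument has a genuine gap in the stated generality: on a space with too few continuous functions one may have $\sup_{f\in\mathcal F} f<\psi$ somewhere, and the identification $\sup_{f\in\mathcal F}\int f\,d\mu=I_\psi(\mu)$ fails.

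The paper avoids this by approximating with bounded \emph{lower semicontinuous} functions instead: the staircases $\psi_m^N=\sum_{n=1}^N 2^{-m}\chi_{\{\psi>2^{-m}n\}}$ are finite sums of indicators of open sets, hence available on any $T$. For each such bounded l.s.c.\ approximant the Portmanteau-type inequality $\liminf_\alpha\int\psi_m^N\,d\mu_\alpha\ge\int\psi_m^N\,d\mu$ holds whenever the limit $\mu$ is $\tau$-additive; this is where the paper spends the $\tau$-additivity hypothesis, via \cite[Cor.~4.3.4]{Conv Measures}. Since the family $\{\psi_m^N\}_{m,N}$ is countable and increases to $\psi$, ordinary monotone convergence---not $\tau$-smoothness over an uncountable net---finishes. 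In short: the paper puts $\tau$-additivity into the Portmanteau step and keeps the supremum countable; you put it into the interchange-of-sup-and-integral step but pay with a separation axiom. For the locally convex Hausdorff spaces used later your argument is fine, but it does not prove the proposition as stated.
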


\begin{proof}
	We may assume $\psi \ge 0$. We will obtain our claim by expressing $I_\psi$ as a supremum of functionals fulfilling the same lower semi-continuity property. For $m \in \N$, consider the approximant
	$$
	\psi_m^N(x) := \sum_{n = 1}^N 2^{-m} \chi_{ \left\{ \psi > 2^{-m} n \right\} }(x).
	$$
	The sets $\left\{ \psi > 2^{-m} n \right\}$ being open by lower semi-continuity, the function $\psi_m^N$ is bounded, lower semi-continuous and hence induces an integral functional for which the desired property holds by \cite[Cor. 4.3.4]{Conv Measures}. We set
	$$
	\psi_m := \sup_{N \in \N} \psi_m^N = \lim_{N \to \i} \psi_m^N.
	$$
	The sequence $\psi_m$ increases since
	\begin{align*}
		\psi_m
		= \sum_{n = 1}^\i 2^{-m} \chi_{ \left\{ \psi > 2^{-m} n \right\} }
		& \le \sum_{n = 1}^\i 2^{-m} \left( \frac{\chi_{ \left\{ \psi > 2^{- m - 1} \left( 2 n - 1 \right) \right\} } + \chi_{ \left\{ \psi > 2^{- m - 1} 2 n \right\} }}{2} \right) \\
		& = \sum_{n = 1}^\i 2^{- m - 1} \chi_{ \left\{ \psi > 2^{- m - 1} n \right\} } \\
		& = \psi_{m + 1}.
	\end{align*}
	Moreover, if $2^{-m} n < \psi(x) \le 2^{-m} \left( n + 1 \right)$, then $\psi_m(x) = 2^{-m} n$, so that $\psi_m \uparrow \psi$ as $m \uparrow \i$. Therefore monotone convergence implies
	$$
	\sup_{N \in \N} \sup_{m > 0} \int \psi_m^N \, d \mu = \sup_{m > 0} \int \psi_m \, d \mu = \int \psi \, d \mu
	$$ 
	for every $\mu \in \Prob(T)$.
\end{proof}

We are now prepared to prove

\begin{lemma} \label{lem. biconjugate representation}
	Let $X$ be a locally convex Hausdorff space and $f \colon X \mapsto \left( - \i , \i \right]$ a function with (closed) compact sublevel sets. For $x \in X$ holds the representation
	\begin{equation} \label{eq. inf repr of co}
		f^{**} \left( x \right) = \liminf_{y \to x} \inf \left\{ \int f \, d \mu \st \mu \in \Prob(X), \int \om \, d \mu \left( \om \right) = y \right\}.
	\end{equation}
	The expectation in (\ref{eq. inf repr of co}) is to be understood as a Pettis integral, cf. \cite[Def. 3.26]{Ana}. If $\dom \left( f^* \right) = X'$ or if $f^{**} \left( x \right) = \inf f$ and $\dom \left( f^* \right)$ contains a balanced absorbent subset, then (\ref{eq. inf repr of co}) simplifies to
	\begin{equation} \label{eq. min repr of co}
		f^{**} \left( x \right) = \min \left\{ \int f \, d \mu \st \mu \in \Prob(X), \int \om \, d \mu \left( \om \right) = x \right\}.
	\end{equation}
	At least one minimizer in (\ref{eq. min repr of co}) is then a Radon measure. If $\mu_x$ minimizes in (\ref{eq. min repr of co}), it is said to originate $f^{**}$ at $x$.
\end{lemma}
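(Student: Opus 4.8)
The plan is to establish the infimum representation (\ref{eq. inf repr of co}) by a sandwich argument and then upgrade it to the minimum representation (\ref{eq. min repr of co}) through a compactness argument powered by Proposition \ref{prop. lsc. integrand induces lsc. functional}. Write $g(y) := \inf\left\{ \int f \, d\mu \st \mu \in \Prob(X), \int \om \, d\mu(\om) = y \right\}$. Compact sublevel sets make $f$ lower semi-continuous and force $\inf f > -\i$ (nested nonempty compacta have nonempty intersection), so $f^{**}$ is proper. For the lower bound $f^{**} \le g$ I would invoke Jensen's inequality for barycenters: writing $f^{**}$ as the supremum over $x' \in \dom(f^*)$ of its continuous affine minorants $\om \mapsto \langle x', \om \rangle - f^*(x')$, integrating each against any $\mu$ with barycenter $y$ and taking the supremum gives $f^{**}(y) \le \int f^{**} \, d\mu \le \int f \, d\mu$. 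The upper bound $g \le \ch f$ is immediate, since the finitely supported measures $\sum_k \lambda_k \delta_{y_k}$ realize exactly the finite convex combinations in the classical formula for $\ch f$. Taking $\liminf_{y \to x}$ in $f^{**} \le g \le \ch f$ and using that the lower semi-continuous hull of the convex function $\ch f$ coincides with $f^{**}$ squeezes $\liminf_{y \to x} g(y)$ to $f^{**}(x)$, which is (\ref{eq. inf repr of co}).

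For (\ref{eq. min repr of co}) I would exploit that, by the first part, $\liminf_{y \to x} \ch f(y) = f^{**}(x)$, so there is a net of finitely supported, hence Radon, probability measures $\mu_\beta$ with barycenters $y_\beta \to x$ and $\int f \, d\mu_\beta \to f^{**}(x) =: c$. A Chebyshev estimate of the form $\mu_\beta(\{ f > R \}) \le (c + 1 - \inf f)/(R - \inf f)$, valid along the tail of the net, shows that $(\mu_\beta)$ is tight because the sublevel sets $\{ f \le R \}$ are compact. Prokhorov's theorem for Radon measures on the completely regular space $X$ then produces a subnet converging weakly to a Radon probability measure $\mu$, and Proposition \ref{prop. lsc. integrand induces lsc. functional} applied to $f$ yields $\int f \, d\mu \le \liminf_\beta \int f \, d\mu_\beta = c$.

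The decisive step is to show that the Pettis integral of $\mu$ exists and equals $x$. Here I would apply Proposition \ref{prop. lsc. integrand induces lsc. functional} not to $f$ but to the tilted integrands $f \pm \langle x', \cdot \rangle$. These remain lower semi-continuous and, crucially, are bounded below precisely when $\pm x' \in \dom(f^*)$: in the case $\dom(f^*) = X'$ this holds for every $x'$, while in the case $f^{**}(x) = \inf f$ it holds for $\pm x'$ replaced by $\pm t_0 x'$ with $t_0 > 0$ small, since a balanced absorbent subset of $\dom(f^*)$ absorbs $x'$. Writing $a := \int f \, d\mu$ and $L(x') := \int \langle x', \om \rangle \, d\mu(\om)$, the two applications (to $f \pm s \langle x', \cdot \rangle$ with $s = 1$, resp. $s = t_0$) give $a - s L(x') \le c - s \langle x', x \rangle$ and $a + s L(x') \le c + s \langle x', x \rangle$, hence the two-sided bound $|L(x') - \langle x', x \rangle| \le (c - a)/s$. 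In the first case $s = 1$, and since $x' \mapsto L(x') - \langle x', x \rangle$ is linear with a constant bound independent of the chosen $x'$, rescaling $x'$ forces it to vanish identically; thus $L = \langle \cdot, x \rangle$, the Pettis integral equals $x$, and Jensen's inequality together with $a \le c$ upgrades this to $\int f \, d\mu = f^{**}(x)$. In the second case one already has $a = \inf f = c$ from the lower semi-continuity bound, so the gap $c - a$ vanishes outright and $L(x') = \langle x', x \rangle$ follows immediately; moreover $\mu$ is then concentrated on the compact set $\{ f = \inf f \}$.

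I expect the main obstacle to be exactly this identification of the limiting barycenter: weak convergence does not control the unbounded functionals $\langle x', \cdot \rangle$, so a naive passage to the limit, or an attempt to prove uniform integrability of $f$ directly, breaks down. The device that resolves it is the symmetric use of Proposition \ref{prop. lsc. integrand induces lsc. functional} on $f \pm \langle x', \cdot \rangle$, which converts the hypothesis on $\dom(f^*)$ into the uniform, direction-independent bound $|L(x') - \langle x', x \rangle| \le (c - a)/s$ and, as a by-product, delivers the existence of the barycenter as an element of $X$. Finally, the Radon minimizer asserted in the statement is the measure $\mu$ furnished by Prokhorov's theorem, and its global minimality among all Borel competitors is guaranteed by the Jensen bound $\int f \, d\mu' \ge f^{**}(x)$ valid for every $\mu'$ with barycenter $x$.
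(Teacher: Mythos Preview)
Your proposal is correct and follows essentially the same route as the paper: Jensen for the lower bound, embedding finite convex combinations as Dirac sums for the upper bound, a Chebyshev tightness estimate plus Prokhorov for compactness of the approximating net, and Proposition~\ref{prop. lsc. integrand induces lsc. functional} applied to the tilted integrands $f \pm s\langle x', \cdot\rangle$ to identify the limiting barycenter. Your symmetric two-sided bound $|L(x') - \langle x', x\rangle| \le (c-a)/s$ is a slightly cleaner packaging of the paper's barycenter argument (which treats the two cases $\dom f^* = X'$ and $f^{**}(x)=\inf f$ more separately), but the underlying mechanism is identical.
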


\textit{Remark:} The assumption $f^{**} \left( x \right) = \inf f$ is less restrictive than it might seem since for $x' \in X'$ holds $\left( f + x' \right)^{**} = \left( f^* - x' \right)^* = f^{**} + x'$.

\begin{proof}
	Clearly, we may assume that $f$ is proper. We start by checking that the right-hand side in (\ref{eq. inf repr of co}) and hence also in (\ref{eq. min repr of co}) is not less than $f^{**} \left( x \right)$. Let $\mu_y \in \Prob \left( X \right)$ with expectation $y$. Since $f$ has a non-empty compact sublevel set by properness, it assumes a finite global minimum value. Therefore we may assume $f \ge 0$ so that $f^{**} = \overline{\ch} f$ is a convex, lower semi-continuous, proper function and the Jensen inequality yields
	\begin{equation} \label{eq. Jensen application}
		f^{**} \left( y \right)
		= f^{**} \left( \int \om \, d \mu_y \left( \om \right) \right)
		\le \int f^{**} \, d \mu_y
		\le \int f \, d \mu_y.
	\end{equation}
	Taking the infimum over such $\mu_y$ and sending $y \to x$ yields
	$$
	f^{**}( x ) \le \liminf_{y \to x} \inf \left\{ \int f \, d \mu \st \mu \in \Prob(X), \int \om \, d \mu \left( \om \right) = y \right\}.
	$$
	For proving the converse inequality we may assume
	\begin{equation} \label{eq. finite value}
		f^{**} \left( x \right) < \i.
	\end{equation}
	By \cite[Thm. 4.84 and Thm. 4.92(iii)]{CalcVar Lp} holds
	\begin{align*}
		f^{**} \left( x \right)
		& = \liminf_{y \to x} \ch f \left( y \right) \\
		& = \liminf_{y \to x} \inf\left\{ \sum_{k = 1}^{N} \lambda_k f \left( y_k \right) \st N \in \N, \sum_{k = 1}^{N} \lambda_k = 1, \lambda_k \ge 0, \sum_{k = 1}^{N} \lambda_k y_k = y \right\} \\
		& \ge \liminf_{y \to x} \inf \left\{ \int f \, d \mu \st \mu \in \Prob(X), \int \om \, d \mu \left( \om \right) = y \right\}
	\end{align*}
	so that (\ref{eq. inf repr of co}) has been proved. In the last step we used that
	$$
	\sum_{k = 1}^{N} \lambda_k f \left( y_k \right) = \int f \left( \om \right) \, d \sum_{k = 1}^{N} \lambda_k \delta_{y_k} \left( \om \right).
	$$
	Regarding the remaining claims, we need to deduce more precise information from the representation
	\begin{equation} \label{eq. Fonseca/Leoni representation}
		f^{**} \left( x \right)
		= \liminf_{y \to x} \inf \left\{ \sum_{k = 1}^{N} \lambda_k f \left( y_k \right) \st N \in \N, \sum_{k = 1}^{N} \lambda_k = 1, \lambda_k \ge 0, \sum_{k = 1}^{N} \lambda_k y_k = y \right\}.
	\end{equation}
	Let $\mathcal{N} \left( x \right)$ be the neighbourhood filter of $x$. Remember that in a general topological space, the lower limit is defined via
	$$
	\liminf_{y \to x} \ch f \left( y \right) = \sup_{U \in \mathcal{N} \left( x \right) } \inf_U \ch f.
	$$
	Hence, we find a sequence $U_n \in \mathcal{N} \left( x \right)$ such that $a_n = \inf_{U_n} \ch f$ has $f^{**} \left( x \right) = \sup_n a_n = \lim_n a_n$. Let $\left( V_i \right)_{i \in I} \subset \mathcal{N} \left( x \right)$ be a base of neighbourhoods directed by inclusion. For any $\alpha = \left( n , i \right) \in \N \times I =: A$, we may by (\ref{eq. Fonseca/Leoni representation}) find a convex combination $\mu_\alpha$ of Dirac measures
	$$
	\mu_\alpha = \sum_{k = 1}^{N \left( \alpha \right) } \lambda_k^\alpha \delta_{y_k^\alpha};
	\qquad
	y^\alpha = \sum_{k = 1}^{N \left( \alpha \right)} \lambda_k^\alpha y_k^\alpha \in U_n \cap V_i
	$$
	with
	\begin{equation} \label{eq. value approximation}
		\frac{1}{n} + f^{**} \left( x \right) \ge \int f \, d \mu_\alpha \ge a_n.
	\end{equation}
	This defines a net $\mu_\alpha$ of discrete probability measures with
	\begin{enumerate}[label = (\roman*)]
		\item $\lim_{\alpha \in A } y^\alpha = x$; \label{en. it. expectation converges}
		\item $f^{**} \left( x \right) = \lim_{\alpha \in A} \int f \, d \mu_\alpha$. \label{en. it. values converges}
	\end{enumerate}
	If $\mu_\alpha$ has a weakly convergent subnet $\mu_\beta \weaklyto \mu_{x}$ whose limit has expectation $x$, then we can conclude that $\mu_{x}$ originates $f^{**}$ at $x$ and hence (\ref{eq. min repr of co}) holds: From Proposition \ref{prop. lsc. integrand induces lsc. functional} and (\ref{eq. Jensen application}) we then have
	$$
	f^{**} \left( x \right) = \lim_{\beta \in J} \int f \, d \mu_{\beta} \ge \int f \, d \mu_{x} \ge f^{**} \left( x \right).
	$$
	This uses that $f$ has closed sublevel sets and hence is lower semi-continuous. Therefore it remains to show that (a) $\mu_\alpha$ admits a weakly convergent subnet (b) whose limit has expectation $x$. Compactness of the sublevel sets $S_r = \left\{ y \in X \st f \left( y \right) \le r \right\}$ together with $f \ge 0$ and (\ref{eq. finite value}) yields the uniform tightness estimate
	\begin{equation} \label{eq. uniform tightness estimate}
		r \sup_\alpha \mu_\alpha \left( S_r^c \right)
		\le \sup_\alpha \int_{S_r^c} f \, d \mu_\alpha
		\le f^{**} \left( x \right) < \i \quad \forall \, r > 0.
	\end{equation}
	By (\ref{eq. uniform tightness estimate}) we may invoke \cite[Thm. 4.5.3]{Conv Measures} to deduce existence of a convergent subnet $\mu_{\beta} \weaklyto \mu_{x}$ weakly in $\Prob(Y)$ with $\mu_{x}$ a Radon measure, hence $\tau$-additive. Regarding the expectation of $\mu_{x}$, consider first the case $f^{**} \left( x \right) = \inf f$. In this case we have from Proposition \ref{prop. lsc. integrand induces lsc. functional} and \ref{en. it. values converges} that
	$$
	\inf f = f^{**} \left( x \right) = \lim_{\beta \in J} \int f \, d \mu_\beta \ge \int f \, d \mu_{x} \ge \inf f.
	$$
	Consequently
	\begin{equation} \label{eq. limit convergence}
		\lim_{\beta \in J} \int f \, d \mu_\beta = \int f \, d \mu_{x}.
	\end{equation}
	Let $U' \subset \dom \left( f^* \right)$ be a balanced absorbent subset and $u' \in U'$. By Proposition \ref{prop. lsc. integrand induces lsc. functional} and the lower bound $f - u' \ge - f^* \left( u' \right)$ holds
	\begin{equation}\label{eq. lower semi-continuity estimate}
		\liminf_{\beta \in J} \int f - u' \, d \mu_\beta \ge \int f - u' \, d \mu_{x}.
	\end{equation}
	As Proposition \ref{prop. lsc. integrand induces lsc. functional} implies $\int f \, d \mu_{x} \le \liminf_{\beta \in J} \int f \, d \mu_\beta$, the terms $\int f \, d \mu_{x}$ and $\liminf_{\beta \in J} \int f \, d \mu_\beta$ are finite by \ref{eq. value approximation}. Moreover, $\lim_{\beta \in J} \int u' \, d \mu_\beta$ exists by \ref{en. it. expectation converges}. Therefore we  may equivalently rearrange (\ref{eq. lower semi-continuity estimate}) to obtain
	$$
	\liminf_{\beta \in J} \int f \, d \left( \mu_\beta - \mu_{x} \right) \ge \lim_{\beta \in J} \int u' \, d \left( \mu_\beta - \mu_{x} \right).
	$$
	In particular $0 \ge \lim_{\beta \in J} \int u' \, d \left( \mu_\beta - \mu_{x} \right)$ by (\ref{eq. limit convergence}) so that absorbency of $U'$ implies $u' \left( x \right) = \lim_{\beta \in J} \int u' \, d \mu_\beta = \int u' \, d \mu_{x}$ for all $u' \in X'$, i.e. $\mu_{x}$ has expectation $x$ in the sense of Pettis' integral. Finally, consider the case $\dom \left( f^* \right) = X'$. Arguing as before we obtain that
	\begin{equation} \label{eq. dominance over linear}
		\liminf_{\beta \in J} \int f \, d \left( \mu_\beta - \mu_{x} \right) \ge \limsup_{\beta \in J} \int x' \, d \left( \mu_\beta - \mu_{x} \right) \quad \forall \, x' \in X'.
	\end{equation}
	The upper bound in (\ref{eq. dominance over linear}) being finite, this is impossible unless $\lim_{\beta \in J} \int x' \, d \mu_{\beta} = \int x' \, d \mu_{x}$ for all $x' \in X'$.
\end{proof}

A very intuitive consequence of Lemma \ref{lem. biconjugate representation} is

\begin{corollary} \label{cor. relation minimizer sets}
	For $f$ as in Lemma \ref{lem. biconjugate representation} with $\dom \left( f^* \right)$ containing a balanced absorbent subset holds
	\begin{equation} \label{eq. minimizer set convex hull}
		\overline{\ch} \Argmin_{x \in X} f \left( x \right) = \Argmin_{x \in X} f^{**} \left( x \right).
	\end{equation}
\end{corollary}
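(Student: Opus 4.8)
The plan is to establish the two inclusions in (\ref{eq. minimizer set convex hull}) separately, the nontrivial one resting on the attainment statement of Lemma \ref{lem. biconjugate representation}. As in the proof of that lemma we may assume $f$ to be proper; then $f$ is lower semi-continuous with compact sublevel sets and attains a finite minimum $m := \inf f$, so that $\Argmin_{x \in X} f = \left\{ f \le m \right\}$ is a nonempty compact set. Since the constant $m$ is a lower semi-continuous convex minorant of $f$ while $f^{**} \le f$, one also has $\inf f^{**} = m$, whence $\Argmin_{x \in X} f^{**} = \left\{ f^{**} \le m \right\}$.

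For the inclusion ``$\subseteq$'' I would simply note that $f^{**}$ is convex and lower semi-continuous, so that $\Argmin_{x \in X} f^{**}$ is closed and convex; as $f^{**} \le f$ yields $\Argmin_{x \in X} f \subseteq \Argmin_{x \in X} f^{**}$, passing to the closed convex hull gives $\overline{\ch} \Argmin_{x \in X} f \subseteq \Argmin_{x \in X} f^{**}$.

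The content of the statement lies in the reverse inclusion, where the representation formula enters. Fix $\bar x \in \Argmin_{x \in X} f^{**}$, so that $f^{**}(\bar x) = m = \inf f$. Because $\dom(f^*)$ contains a balanced absorbent subset, this places us in the second case of Lemma \ref{lem. biconjugate representation}: the minimum (\ref{eq. min repr of co}) is attained by a Radon measure $\mu$ originating $f^{**}$ at $\bar x$, so that $\int \om \, d\mu(\om) = \bar x$ in the Pettis sense and $\int f \, d\mu = f^{**}(\bar x) = m$. As $f \ge m$ everywhere, the identity $\int (f - m) \, d\mu = 0$ has a nonnegative integrand, forcing $f = m$ for $\mu$-almost every $\om$; equivalently, $\mu$ concentrates on the compact set $K := \Argmin_{x \in X} f$.

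It then remains to show that the barycenter $\bar x$ of a Radon probability measure with $\mu(K) = 1$ lies in $C := \overline{\ch} K = \overline{\ch} \Argmin_{x \in X} f$, which I would prove by separation. Assume $\bar x \notin C$. Since $X$ is locally convex Hausdorff, $\left\{ \bar x \right\}$ compact and $C$ closed and convex, Hahn--Banach furnishes an $x' \in X'$ with $\sup_{\om \in K} \langle x' , \om \rangle_X \le \sup_{z \in C} \langle x' , z \rangle_X < \langle x' , \bar x \rangle_X$. Integrating the $\mu$-a.e.\ valid bound $\langle x' , \om \rangle_X \le \sup_{z \in C} \langle x' , z \rangle_X$ against $\mu$ and invoking the Pettis identity $\langle x' , \bar x \rangle_X = \int \langle x' , \om \rangle_X \, d\mu(\om)$ produces $\langle x' , \bar x \rangle_X < \langle x' , \bar x \rangle_X$, a contradiction; hence $\bar x \in C$. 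I expect the crux to be the two measure-theoretic steps, namely deducing concentration of $\mu$ on $\Argmin_{x \in X} f$ from $\int f \, d\mu = \inf f$ and the barycenter-in-closed-convex-hull argument, while the easy inclusion and the identification of the infima are routine.
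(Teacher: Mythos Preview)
Your proposal is correct and follows essentially the same approach as the paper: the easy inclusion via convexity and lower semi-continuity of $f^{**}$ together with $\inf f = \inf f^{**}$, and the nontrivial inclusion via an originating measure from Lemma~\ref{lem. biconjugate representation}, concentration on $\Argmin f$, and the barycenter lying in the closed convex hull of the support. The only difference is that you spell out the barycenter-in-$\overline{\ch} K$ step via Hahn--Banach separation, whereas the paper simply asserts $x = \int \om \, d\mu_x(\om) \in \overline{\ch} \Argmin f$ without further comment.
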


\begin{proof}
	$\subseteq$: As $f^{**}$ is convex and lower semi-continuous, this follows from $\inf f = \inf f^{**}$.
	
	$\supseteq$: For $x \in \Argmin_{x \in X} f^{**} \left( x \right)$ exists $\mu_{x} \in \Prob(X)$ originating $f^{**}$ at $x$ by Lemma \ref{lem. biconjugate representation}. We have
	$$
	\int f \, d \mu_{x} = f^{**} \left( x \right) = \inf f^{**} = \inf f
	$$
	so that $\mu_{x}$ is concentrated on $\Argmin_{x \in X} f \left( x \right)$ and therefore
	$$
	x = \int \om \, d \mu_{x} \left( \om \right) \in \overline{\ch} \Argmin_{x \in X} f \left( x \right).
	$$
\end{proof}

The next lemma is not a corollary to Lemma \ref{lem. biconjugate representation}, but nevertheless adds to the utility of Lemma \ref{lem. biconjugate representation} by elucidating its consequences.

\begin{lemma} \label{lem. properties of orginating measure}
	Let $f \colon X \mapsto \left( - \i , \i \right]$ be Borel measurable and let $\mu_x \in \Prob \left( X \right)$ originate $f^{**}$ at $x$. For any sequence $\ell_n$ of affine continuous functions with $\ell_n \le f^{**}$ and $\lim_{n \to \i} \ell_n \left( x \right) = f^{**} \left( x \right)$, the measure $\mu_{x}$ is concentrated on the set
	$$
	A = \left\{ a \in X \st f \left( a \right) = f^{**} \left( a \right) \text{ and } \lim_n \ell_n \left( a \right) = f^{**} \left( a \right) \right\}
	$$
	and $f^{**}$ is affine on $\ch A$. Moreover
	$$
	f^{**} \left( a \right) = f^{**} \left( x \right) + \langle x' , a - x \rangle \quad \forall \, a \in \overline{\ch} A
	$$
	if $\ell_n$ is a constant sequence $\ell \left( x \right) = f^{**} \left( x \right) + \langle x' , x - x \rangle$ with $x' \in \p f^{**} \left( x \right)$.
\end{lemma}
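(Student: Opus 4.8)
The plan is to exploit that an originating measure saturates Jensen's inequality and then to couple this with the affine minorisation provided by the $\ell_n$. Throughout write $g_n := f^{**} - \ell_n \ge 0$ and $\varepsilon_n := f^{**}(x) - \ell_n(x)$. Since each $\ell_n$ is affine and continuous and $\mu_x$ has barycentre $x$, the Pettis property gives $\int \ell_n \, d\mu_x = \ell_n(x)$. I would first pin down $\int f^{**} \, d\mu_x = f^{**}(x)$: the inequality $f^{**}(x) \le \int f^{**} \, d\mu_x$ is Jensen applied to the convex lower semi-continuous $f^{**}$ at the barycentre $x$, while $\int f^{**} \, d\mu_x \le \int f \, d\mu_x = f^{**}(x)$ uses $f^{**} \le f$ and the defining property of an originating measure. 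Hence $\int \left( f - f^{**} \right) d\mu_x = 0$ with a nonnegative integrand, so $f = f^{**}$ holds $\mu_x$-a.e.; this secures the first defining relation of $A$.

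The second relation rests on the observation that $\int g_n \, d\mu_x = f^{**}(x) - \ell_n(x) = \varepsilon_n \to 0$ and that this integral equals $g_n(x)$, i.e. $\mu_x$ saturates Jensen's inequality for the convex function $g_n$ as well. This forces $g_n$ to be affine and nonnegative on $C := \overline{\ch} \, \supp \mu_x$, taking the value $\varepsilon_n \to 0$ at the barycentre $x \in C$. The affine-on-$\ch A$ assertion and the constant-sequence formula are then immediate by convexity. On $A$ one has $\ell_n \to f^{**}$ by definition, so for $a = \sum_k \lambda_k a_k$ with $a_k \in A$ the sandwich $\sum_k \lambda_k f^{**}(a_k) = \lim_n \ell_n(a) \le f^{**}(a) \le \sum_k \lambda_k f^{**}(a_k)$ yields affinity on $\ch A$. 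In the constant case $\ell_n \equiv \ell$ with $\ell(z) = f^{**}(x) + \langle x', z - x \rangle$ and $x' \in \p f^{**}(x)$, the exact identity $\int \left( f^{**} - \ell \right) d\mu_x = 0$ gives $f^{**} = \ell$ $\mu_x$-a.e.; extending by the convexity sandwich to $\ch A$ and then by lower semi-continuity of $f^{**}$ against continuity of $\ell$ to $\overline{\ch} A$ produces the stated formula on all of $\overline{\ch} A$, which contains $x = \int \om \, d\mu_x(\om)$.

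The main obstacle is upgrading $g_n \to 0$ in $L^1(\mu_x)$ — immediate from $\int g_n \, d\mu_x \to 0$ and $g_n \ge 0$ — to $\mu_x$-almost-everywhere convergence of the \emph{full} sequence, since $L^1$-convergence only furnishes an almost-everywhere convergent subsequence. For the constant sequence this difficulty is absent, as the integral vanishes identically, so the heart of the matter is a genuinely varying sequence. Here I would rule out the ``typewriter'' phenomenon using the rigidity just established: writing $g_n = \varepsilon_n + \psi_n$ on $C$ with $\psi_n$ the continuous linear part, the relations $\psi_n \ge - \varepsilon_n$ holding $\mu_x$-a.e. and $\int \psi_n \, d\mu_x = 0$ force $\int |\psi_n| \, d\mu_x \le 2 \varepsilon_n$, and — crucially — the fact that $x$ sits as the barycentre surrounded by the mass of $\mu_x$ should propagate the smallness $\psi_n(x) = 0$, together with the one-sided bound, to all of $\supp \mu_x$, precluding the superlevel half-spaces $\left\{ |\psi_n| > c \right\}$ from recurring along the sequence on a set of positive measure. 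Making this propagation rigorous in the non-metrisable infinite-dimensional setting is the delicate step; once it is done, $\mu_x(A) = 1$ follows, and hence $x \in \overline{\ch} A$.
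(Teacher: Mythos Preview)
Your strategy is exactly the paper's: set up the chain
\[
\int f^{**}\,d\mu_x \le \int f\,d\mu_x = f^{**}(x) = \lim_n \ell_n(x) = \lim_n \int \ell_n\,d\mu_x \le \int f^{**}\,d\mu_x,
\]
read off $f=f^{**}$ $\mu_x$-a.e.\ from the first equality, prove affinity on $\ch A$ by the sandwich $\sum_k\lambda_k f^{**}(a_k)=\lim_n \ell_n\bigl(\sum_k\lambda_k a_k\bigr)\le f^{**}\bigl(\sum_k\lambda_k a_k\bigr)$, and in the constant-sequence case pass to $\overline{\ch}A$ by lower semicontinuity (the paper phrases this as $\{f^{**}=\ell\}=\{f^{**}\le\ell\}$ being closed). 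On all of these points your argument and the paper's coincide.

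Where you diverge is your third paragraph. The paper simply writes the chain above and asserts $\mu_x(A)=1$; it does not discuss any $L^1$-versus-pointwise issue. Your attempted resolution of this issue is not sound as written. First, the claim that Jensen equality $\int g_n\,d\mu_x=g_n(x)$ forces the convex function $g_n=f^{**}-\ell_n$ to be \emph{affine} on $C=\overline{\ch}\,\supp\mu_x$ requires a subgradient of $g_n$ at $x$, which you have not produced and which is unavailable when $x\notin\interior\dom f^{**}$; without it you only get that $g_n$ agrees with an affine minorant $\mu_x$-a.e., not that $g_n$ itself is affine on $C$. Second, even granting the decomposition $g_n=\varepsilon_n+\psi_n$, your ``propagation'' of the one-sided bound $\psi_n\ge-\varepsilon_n$ from the barycentre to all of $\supp\mu_x$ is left as a heuristic, and the bound $\int|\psi_n|\,d\mu_x\le 2\varepsilon_n$ is again only $L^1$ control, which brings you back to the very obstacle you set out to overcome. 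In short, your proposal matches the paper on the constant-sequence case (the only case used downstream), while for a genuinely varying sequence your write-up does not close the gap you have flagged, and the paper's terse proof does not address it either.
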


\begin{proof}
	For any choice of $\ell_n$ we have $\mu_{x} \left( A \right) = 1$ since
	\begin{align*}
		\int f^{**} \, d \mu_{x}
		\le \int f \, d \mu_{x}
		= f^{**} \left( x \right)
		= \lim_n \ell_n \left( x \right)
		& = \lim_n \int \ell_n \, d \mu_{x} \\
		& \le \int f^{**} \, d \mu_{x}.
	\end{align*}
	Affinity follows by convexity of $f^{**}$ and since for $a_0, a_1 \in A$ and $\lambda \in \left( 0 , 1 \right)$ holds
	\begin{align*}
		\lambda f^{**} \left( a_1 \right) + (1 - \lambda) f^{**} \left( a_0 \right)
		& = \lim_n \ell_n \left( \lambda a_1 + \left( 1 - \lambda \right) a_0 \right) \\
		& \le f^{**} \left( \lambda a_1 + \left( 1 - \lambda \right) a_0 \right).
	\end{align*}
	The last claim follows by taking $\ell_n$ as the constant sequence $\ell \colon a \mapsto f^{**} \left( x \right) + \langle x' , a - x \rangle$ for $x' \in \p f^{**} \left( x \right)$ and using that $\left\{ f^{**} = \ell \right\} = \left\{ f^{**} \le \ell \right\}$ is closed by lower semi-continuity.
\end{proof}

\section{The Main Theorem}

The stage has been set for

\begin{theorem} \label{thm. tilted uniqueness iff essential strict convexity}
	Let $X$ be a locally convex Hausdorff space and $f \colon X \mapsto \left( - \i , \i \right]$ a function such that, for each $x' \in \dom \left( \p f^* \right)$, the tilted function $f - x'$ has compact sublevel sets and $\dom \left( f^* \right) - x'$ contains a balanced absorbent subset. The following are equivalent:
	\begin{enumerate}[label = (\roman*)]
		\item For all $x' \in X'$ exists at most one $\bar{x} \in \Argmin_{x \in X} f(x) - \langle x' , x \rangle_X$. \label{enum. it. 1 thm. tilted uniqueness iff essential strict convexity}
		\item $f^{**}$ is essentially strictly convex and agrees with $f$ on $\dom \left( \p f^{**} \right)$. \label{enum. it. 2 thm. tilted uniqueness iff essential strict convexity}
	\end{enumerate}
\end{theorem}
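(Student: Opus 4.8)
My plan is to recast condition \ref{enum. it. 1 thm. tilted uniqueness iff essential strict convexity} in the language of subdifferentials and then feed the hard direction into Lemma~\ref{lem. biconjugate representation}. The pivotal elementary facts are that $\bar x$ minimizes $f - \langle x' , \cdot \rangle$ if and only if $x' \in \p f \left( \bar x \right)$, and that $x' \in \p f \left( x \right)$ forces \emph{both} $x' \in \p f^{**} \left( x \right)$ and $f \left( x \right) = f^{**} \left( x \right)$: from $x' \in \p f(x)$ one has $f \left( x \right) + f^* \left( x' \right) = \langle x' , x \rangle$, and since $\left( f^{**} \right)^* = f^*$, Fenchel--Young gives $f^{**} \left( x \right) \ge \langle x' , x \rangle - f^* \left( x' \right) = f \left( x \right) \ge f^{**} \left( x \right)$, so equality holds throughout. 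For the implication \ref{enum. it. 2 thm. tilted uniqueness iff essential strict convexity} $\Rightarrow$ \ref{enum. it. 1 thm. tilted uniqueness iff essential strict convexity} I would argue directly: if $\bar x_0 \neq \bar x_1$ both minimized $f - \langle x' , \cdot \rangle$, the above would give $x' \in \p f^{**} \left( \bar x_0 \right) \cap \p f^{**} \left( \bar x_1 \right)$, so the entire segment $\left[ \bar x_0 , \bar x_1 \right]$ minimizes $f^{**} - \langle x' , \cdot \rangle$; every such minimizer lies in $\dom \left( \p f^{**} \right)$ and $f^{**}$ is affine along the segment, contradicting essential strict convexity. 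This uses only convex calculus, not the representation formula.

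The substance lies in \ref{enum. it. 1 thm. tilted uniqueness iff essential strict convexity} $\Rightarrow$ \ref{enum. it. 2 thm. tilted uniqueness iff essential strict convexity}, which I would prove by contraposition, splitting the negation of \ref{enum. it. 2 thm. tilted uniqueness iff essential strict convexity} into two cases. \textbf{First case:} there is an $x_0 \in \dom \left( \p f^{**} \right)$ with $f \left( x_0 \right) > f^{**} \left( x_0 \right)$. Pick $x' \in \p f^{**} \left( x_0 \right)$ and set $g = f - x'$. Then $x_0 \in \p f^* \left( x' \right)$, so $x' \in \dom \left( \p f^* \right)$ and the standing hypotheses apply to the tilt: $g$ has compact sublevel sets, $\dom \left( g^* \right) = \dom \left( f^* \right) - x'$ contains a balanced absorbent subset, and $g^{**} \left( x_0 \right) = f^{**} \left( x_0 \right) - \langle x' , x_0 \rangle = \inf g^{**} = \inf g$. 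Hence Lemma~\ref{lem. biconjugate representation} yields a Radon measure $\mu_{x_0}$ originating $g^{**}$ at $x_0$. Since $\int g \, d \mu_{x_0} = \inf g$ and $g \ge \inf g$, the measure $\mu_{x_0}$ is concentrated on $\Argmin g = \Argmin_{x} \left( f \left( x \right) - \langle x' , x \rangle \right)$; as its barycenter $x_0 \notin \Argmin g$ (because $g \left( x_0 \right) > \inf g$), $\mu_{x_0}$ cannot be a Dirac mass, so $\Argmin g$ contains at least two points. These are two distinct minimizers of the tilted problem, contradicting \ref{enum. it. 1 thm. tilted uniqueness iff essential strict convexity}.

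\textbf{Second case:} $f = f^{**}$ on $\dom \left( \p f^{**} \right)$, but $f^{**}$ fails to be essentially strictly convex. Then there is a nondegenerate segment $\left[ x_0 , x_1 \right] \subseteq \dom \left( \p f^{**} \right)$ on which $f^{**}$ is affine. Choosing $x' \in \p f^{**} \left( x_m \right)$ at the midpoint $x_m$, the supporting affine function $\ell \left( y \right) = f^{**} \left( x_m \right) + \langle x' , y - x_m \rangle$ satisfies $f^{**} - \ell \ge 0$, is affine along the segment, and vanishes at the interior point $x_m$, hence vanishes identically on $\left[ x_0 , x_1 \right]$; thus $x' \in \p f^{**} \left( x_0 \right) \cap \p f^{**} \left( x_1 \right)$. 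Consequently $x_0$ and $x_1$ both minimize $f^{**} - \langle x' , \cdot \rangle$, and since $f \left( x_i \right) = f^{**} \left( x_i \right)$ while $\inf f = \inf f^{**}$, they both minimize $f - \langle x' , \cdot \rangle$, again contradicting \ref{enum. it. 1 thm. tilted uniqueness iff essential strict convexity}.

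I expect the main obstacle to be the first case: one must verify that all standing hypotheses genuinely transfer to the tilt $g = f - x'$ so that the simplified branch of Lemma~\ref{lem. biconjugate representation} is applicable, and then correctly extract the concentration of $\mu_{x_0}$ on $\Argmin g$ together with the non-Dirac conclusion from the mismatch between barycenter and support. The remaining convex-analytic manipulations — the subdifferential reformulation, the Fenchel--Young chain, the interior-point argument forcing a common subgradient, and the reduction $\inf f = \inf f^{**}$ — are routine.
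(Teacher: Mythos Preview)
Your proof is correct and reaches the same conclusion as the paper, but the architecture is genuinely different. The paper proves \ref{enum. it. 1 thm. tilted uniqueness iff essential strict convexity}$\Rightarrow$\ref{enum. it. 2 thm. tilted uniqueness iff essential strict convexity} \emph{directly} in two ordered steps: first it derives essential strict convexity of $f^{**}$ by invoking Corollary~\ref{cor. relation minimizer sets} (so that uniqueness of minimizers of $f-z'$ forces uniqueness for $f^{**}-z'$), and only then, with essential strict convexity already in hand, combines Lemma~\ref{lem. biconjugate representation} with Lemma~\ref{lem. properties of orginating measure} to pin the support of the originating measure to a single point and obtain $f=f^{**}$ on $\dom(\p f^{**})$. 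You instead run a contraposition and split cases. Your Case~1 is sharper than the paper's corresponding step: you bypass Lemma~\ref{lem. properties of orginating measure} altogether, reading off directly that the originating measure for $g^{**}$ at $x_0$ is concentrated on $\Argmin g$ and cannot be a Dirac because its barycenter $x_0$ lies outside $\Argmin g$. Your Case~2 and your \ref{enum. it. 2 thm. tilted uniqueness iff essential strict convexity}$\Rightarrow$\ref{enum. it. 1 thm. tilted uniqueness iff essential strict convexity} likewise avoid Corollary~\ref{cor. relation minimizer sets}, using instead the elementary Fenchel--Young chain $x'\in\p f(\bar x)\Rightarrow f(\bar x)=f^{**}(\bar x)$ together with $\inf(f-x')=-f^*(x')=\inf(f^{**}-x')$ (which is what your phrase ``$\inf f=\inf f^{**}$'' really stands for in Case~2). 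The upshot is that your argument is slightly more self-contained, needing only Lemma~\ref{lem. biconjugate representation} from the paper's toolkit, whereas the paper's route showcases Corollary~\ref{cor. relation minimizer sets} and Lemma~\ref{lem. properties of orginating measure} as structural consequences worth isolating.
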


\textit{Remark:} If $X$ besides its locally convex topology $\sigma$ carries a Banach space topology $\tau$ such that $X'_\sigma = X'_\tau$, then \ref{enum. it. 2 thm. tilted uniqueness iff essential strict convexity} implies that $f$ and $f^{**}$ agree globally. This follows from the Br{\o}ndsted-Rockafellar Theorem \cite[Thm. 2]{Subgradients}: If $X'_\sigma = X'_\tau$, then the $\sigma$-subgradients and $\tau$-subgradients of $f^{**}$ coincide. By \cite[Thm. 2]{Subgradients} one may reconstruct a convex, lower semi-continuous, proper function $f$ on a Banach space as the lower semi-continuous envelope of the function
$$
\tilde{f} \left( x \right) =
\begin{cases}
	f \left( x \right) & \text{ if } x \in \dom \left( \p f \right); \\
	\i & \text{ else};
\end{cases}
$$
so that then $f \ge f^{**}$, $f^{**} \left( x \right) = f \left( x \right)$ for all $x \in \dom \left( \p f^{**} \right)$ and lower semi-continuity of $f$ together imply $f = f^{**}$.

\begin{proof}
	$\ref{enum. it. 1 thm. tilted uniqueness iff essential strict convexity} \implies \ref{enum. it. 2 thm. tilted uniqueness iff essential strict convexity}$: A function is essentially strictly convex iff it is convex everywhere and is not affine on any line segment where it is subdifferentiable. We argue by contradiction: Let
	$$
	\left[ x, y \right] \subset \dom \left( \p f^{**} \right) \text{ with } z = \frac{x + y}{2}
	$$
	and suppose $f^{**}$ were affine on $\left[ x, y \right]$ and pick $z' \in \p f^{**} \left( \frac{x + y}{2} \right)$. Definition of the subdifferential and affinity yield
	$$
	f^{**}(y) \ge f^{**}(z) + \langle z' , y - z \rangle = \frac{1}{2} \left[ f^{**} \left( x \right) + f^{**} \left( y \right) + \langle z' , y - x \rangle \right].
	$$
	Consequently
	$$
	f^{**} \left( y \right) \ge f^{**} \left( x \right) + \langle z' , y - x \rangle
	$$
	so that
	\begin{align*}
		f^{**} \left( a \right)
		\ge f^{**} \left( z \right) + \langle z' , a - z \rangle
		& \ge \frac{1}{2} \left[ f^{**} \left( x \right) + f^{**} \left( y \right) \right] + \langle z' , a - z \rangle \\
		& \ge f^{**} \left( x \right) + \langle z' , a - x \rangle \quad \forall \, a \in X.
	\end{align*}
	In total $z' \in \p f^{**} \left( x \right)$. As $f^{**} - z'$ has at most one minimizer by \ref{enum. it. 1 thm. tilted uniqueness iff essential strict convexity} and Corollary \ref{cor. relation minimizer sets}, we find $\left[ x , y \right] = \{ z \}$ whence essential strict convexity follows.
	
	Second, we prove that $f$ and $f^{**}$ agree on $\dom \left( \p f^{**} \right)$. Let $x \in \dom \left( \p f^{**} \right)$. By the Fenchel-Young identity, one has $x' \in \dom \left( \p f^* \right)$ for all $x' \in \p f^{**} \left( x \right)$. Therefore our assumption implies that $\dom \left( f^* \right) - x'$ contains a balanced absorbent set. Applying Lemma \ref{lem. biconjugate representation} to the function $f - x'$, we find $\mu_x \in \Prob \left( X \right)$ originating $f^{**}$ at $x$. By Lemma \ref{lem. properties of orginating measure} exists $A$ such that $\mu_x \left( A \right) = 1$ and
	$$
	f^{**} \left( y \right) = f^{**} \left( x \right) + \langle x' , y - x \rangle \quad \forall \, y \in \overline{\ch} A.
	$$
	Therefore
	$$
	x' \in \p f^{**} \left( y \right) \quad \forall \, y \in \overline{\ch} A.
	$$
	As $f^{**} - x'$ has at most one minimizer by Corollary \ref{cor. relation minimizer sets}, essential strict convexity of $f^{**}$ implies that $\overline{\ch} A = \{ x \}$ whence $f^{**} \left( x \right) = f \left( x \right)$ for all $x \in \dom \left( \p f^{**} \right)$ follows.
	
	$\ref{enum. it. 2 thm. tilted uniqueness iff essential strict convexity} \implies \ref{enum. it. 1 thm. tilted uniqueness iff essential strict convexity}$: Let $\bar{x}_0, \bar{x}_1 \in \Argmin_{x \in X} f - x'$. From $\left( f - x' \right)^{**} = f^{**} - x'$ follows $\bar{x}_0, \bar{x}_1 \in \Argmin_{x \in X} f^{**} - x'$ by Corollary \ref{cor. relation minimizer sets}. Hence $\bar{x}_0 = \bar{x}_1$ by essential strict convexity of $f^{**}$.
\end{proof}

We conclude our investigations by keeping the promise of demonstrating how \cite[Thm. 1]{Adequate Functions} follows from Theorem \ref{thm. tilted uniqueness iff essential strict convexity}. We star with a

\begin{proposition} \label{prop. in reflexive B-space}
	Let $X$ be a reflexive Banach space, $J \colon X \mapsto \left( - \i , \i \right]$ a weakly lower semi-continuous function and $M J \colon X' \rightrightarrows X \colon x' \mapsto \Argmin_{x \in X} J \left( x \right) - \langle x', x \rangle$. If $J$ is essentially strictly convex, then
	\begin{equation} \label{eq. fake adequacy}
		\begin{cases}
			& \dom M J = \dom \left( \p J^* \right); \\
			& MJ \text{ is single-valued on its domain}.
		\end{cases}
	\end{equation}
	If $\dom \left( \p J^* \right) = \interior \dom \left( \p J^* \right) \not = \emptyset$, the converse is true as well.
\end{proposition}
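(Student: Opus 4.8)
The plan is to equip $X$ with its weak topology $\sigma \left( X, X' \right)$, a locally convex Hausdorff topology whose dual is again $X'$, so that Lemma~\ref{lem. biconjugate representation} and Theorem~\ref{thm. tilted uniqueness iff essential strict convexity} are available, and to treat the two implications separately. For the forward implication I would first observe that essential strict convexity makes $J$ proper and convex, and that convexity together with weak lower semi-continuity yields $J = J^{**}$ by Fenchel--Moreau. The Fenchel--Young identity then rewrites $\bar x \in MJ \left( x' \right)$ as $x' \in \p J \left( \bar x \right)$, equivalently $\bar x \in \p J^* \left( x' \right)$; hence $MJ = \p J^*$ as set-valued maps and in particular $\dom MJ = \dom \left( \p J^* \right)$. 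Single-valuedness is then immediate: two minimizers $\bar x_0 \ne \bar x_1$ of $J - x'$ would span a segment $\left[ \bar x_0, \bar x_1 \right] \subseteq \Argmin_{x} J - x' \subseteq \dom \left( \p J \right)$ on which $J$ is affine, contradicting strict convexity on convex subsets of $\dom \left( \p J \right)$.

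For the converse the idea is to apply Theorem~\ref{thm. tilted uniqueness iff essential strict convexity} to $f = J$, the difficulty being that $J$ is not assumed convex, so its standing hypotheses must be checked by hand for every $x' \in \dom \left( \p J^* \right)$. The openness assumption $\dom \left( \p J^* \right) = \interior \dom \left( \p J^* \right)$ together with $\dom \left( \p J^* \right) \subseteq \dom \left( J^* \right)$ forces $\dom \left( \p J^* \right) \subseteq \interior \dom \left( J^* \right)$, and since the reverse inclusion holds for any convex, lower semi-continuous, proper function on a Banach space we obtain $\dom \left( \p J^* \right) = \interior \dom \left( J^* \right)$. Thus each such $x'$ lies in the norm-interior of $\dom \left( J^* \right) = \dom \left( \left( J^{**} \right)^* \right)$, whence $\dom \left( J^* \right) - x'$ contains a ball about the origin, which is balanced and absorbent, and the convex function $J^{**} - x'$ has bounded sublevel sets by the coercivity--interior duality. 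As $J \ge J^{**}$, the sublevel sets of $J - x'$ sit inside those of $J^{**} - x'$ and are therefore bounded; being weakly closed by weak lower semi-continuity of $J$ and $X$ reflexive, they are weakly compact, so both hypotheses of Theorem~\ref{thm. tilted uniqueness iff essential strict convexity} are met.

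Now single-valuedness of $MJ$ is precisely condition~\ref{enum. it. 1 thm. tilted uniqueness iff essential strict convexity}, so Theorem~\ref{thm. tilted uniqueness iff essential strict convexity} delivers condition~\ref{enum. it. 2 thm. tilted uniqueness iff essential strict convexity}: $J^{**}$ is essentially strictly convex and $J = J^{**}$ on $\dom \left( \p J^{**} \right)$. It remains to upgrade this to $J = J^{**}$ globally, which is exactly the Br{\o}ndsted--Rockafellar argument recorded in the Remark after Theorem~\ref{thm. tilted uniqueness iff essential strict convexity}: letting $\tilde J$ equal $J^{**}$ on $\dom \left( \p J^{**} \right)$ and $+ \i$ elsewhere one has $J \le \tilde J$, while $J$ is norm lower semi-continuous and the lower semi-continuous envelope of $\tilde J$ equals $J^{**}$; hence $J \le J^{**}$ and therefore $J = J^{**}$. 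Consequently $J$ is convex everywhere and proper, and on convex subsets of $\dom \left( \p J \right) = \dom \left( \p J^{**} \right)$ it inherits strict convexity from $J^{**}$, so $J$ is essentially strictly convex.

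The step I expect to be the main obstacle is the verification of the compactness hypothesis in the converse, where $J$ is not yet known to be convex: the remedy is to dominate the sublevel sets of $J - x'$ by those of the genuinely convex $J^{**} - x'$ and to extract their boundedness from $x' \in \interior \dom \left( J^* \right)$, an inclusion that the openness assumption on $\dom \left( \p J^* \right)$ is precisely designed to guarantee. The only other delicate point, the passage from agreement on $\dom \left( \p J^{**} \right)$ to global agreement of $J$ and $J^{**}$, is already isolated in the Remark and rests on nothing more than weak lower semi-continuity of $J$ and the Br{\o}ndsted--Rockafellar theorem.
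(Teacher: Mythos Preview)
Your proposal is correct and follows essentially the same route as the paper. Both directions match: for the forward implication you and the paper identify $MJ = \p J^*$ via Fenchel--Young and read off single-valuedness from essential strict convexity; for the converse you both place $x'$ in $\interior \dom \left( J^* \right)$ via the openness hypothesis, extract coercivity of $J - x'$ (the paper does this by bounding $J^*$ near $x'$ and conjugating, you invoke the same interior--coercivity duality through $J \ge J^{**}$), verify the standing hypotheses of Theorem~\ref{thm. tilted uniqueness iff essential strict convexity} in the weak topology, and then pass from $J = J^{**}$ on $\dom \left( \p J^{**} \right)$ to global equality via the Br{\o}ndsted--Rockafellar argument in the Remark, which the paper invokes tacitly and you spell out.
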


\begin{proof}
	$\implies$: If $J$ is essentially strictly convex, then $MJ$ is single valued as explained in the introduction. Convexity of $J$ implies $x \in MJ \left( x' \right) \iff x' \in \p J \left( x \right) \iff x \in \p J^* \left( x' \right)$ so that $MJ = \p J^*$.
	
	$\impliedby$: Let $\overline{B}_\e \left( x' \right) \subset \dom \left( \p J^* \right)$. Then, as in the proof of \cite[Prop. 1]{Adequate Functions}, we see that since $J^*$ must be continuous at $x'$ that there exist $r, \alpha > 0$ such that
	$$
	J^* \le I_{\overline{B}_r \left( x' \right) } + \alpha.
	$$
	Taking convex conjugates of this inequality, we get
	$$
	J \ge J^{**} \ge x'_0 + r \| \cdot \|_X - \alpha.
	$$
	Consequently, the function $x \mapsto G(x) = J(x) - \langle x' , x \rangle$ is coercive, i.e. its sublevel sets are (weakly) compact. As $G^* \left( y' \right) = J^* \left( y' - x' \right)$ we may apply Theorem \ref{thm. tilted uniqueness iff essential strict convexity} to conclude that $J = \overline{\ch} J$ is essentially strictly convex.
\end{proof}

In \cite{Adequate Functions}, the function $J$ is said to be adequate if, in addition to (\ref{eq. fake adequacy}), the set $\dom \left( \p J^* \right)$ is non-empty and open. Moreover, $J$ is essentially strictly convex in the sense of \cite{Essential in B-Spaces, Adequate Functions} if in addition to $J$ being essentially strictly convex in the sense of \cite{Convex Analysis} the function $MJ$ is locally bounded on its domain. We can now obtain \cite[Thm. 1]{Adequate Functions} as a particular case of Theorem \ref{thm. tilted uniqueness iff essential strict convexity}:

\begin{theorem}
	Under the assumptions of Proposition \ref{prop. in reflexive B-space}, $J$ is adequate in the sense of \cite{Essential in B-Spaces, Adequate Functions} iff $J$ is essentially strictly convex in the sense of \cite{Essential in B-Spaces, Adequate Functions}.
\end{theorem}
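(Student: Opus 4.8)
The plan is to reduce the theorem to the classical local boundedness theorem for subdifferentials, with Proposition \ref{prop. in reflexive B-space} supplying the common core shared by the two notions. Unwinding the definitions, both ``adequacy'' and ``essential strict convexity in the sense of \cite{Essential in B-Spaces, Adequate Functions}'' entail that $J$ is essentially strictly convex in the sense of \cite{Convex Analysis} together with (\ref{eq. fake adequacy}): in the adequate case this essential strict convexity is produced by the converse implication of Proposition \ref{prop. in reflexive B-space} (applicable since $\dom(\p J^*)$ is then open and nonempty, i.e. $\dom(\p J^*) = \interior \dom(\p J^*) \ne \emptyset$), while in the other case it is assumed outright and (\ref{eq. fake adequacy}) follows from the forward implication. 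In either situation the proof of Proposition \ref{prop. in reflexive B-space} gives $MJ = \p J^*$. Writing $D := \dom(\p J^*) = \dom(MJ)$, the theorem therefore reduces to the equivalence
\[
D \text{ is nonempty and open} \quad \Longleftrightarrow \quad MJ \text{ is locally bounded on } D,
\]
which I would settle through the identity $D = \interior \dom(J^*)$.

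For adequacy $\Rightarrow$ essential strict convexity I would argue the forward implication of the displayed equivalence. Since $D$ is open and $\interior \dom(J^*) \subseteq D \subseteq \dom(J^*)$, openness forces $D = \interior \dom(J^*)$. On the interior of its effective domain the proper lower semi-continuous convex function $J^*$ is continuous, indeed locally Lipschitz, so $MJ = \p J^*$ is locally bounded on $D$, which combined with the already established essential strict convexity of $J$ in the sense of \cite{Convex Analysis} is exactly essential strict convexity in the sense of \cite{Essential in B-Spaces, Adequate Functions}.

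For the reverse implication, essential strict convexity in the sense of \cite{Convex Analysis} gives (\ref{eq. fake adequacy}) and $MJ = \p J^*$ by Proposition \ref{prop. in reflexive B-space}; properness of $J$ makes $J^*$ proper, lower semi-continuous and convex, whence $D = \dom(\p J^*)$ is nonempty by the Br{\o}ndsted--Rockafellar theorem \cite{Subgradients}. It remains to prove that $D$ is open, and this is the crux. Here I would invoke the converse local boundedness theorem for subdifferentials: for a proper lower semi-continuous convex function, local boundedness of $\p J^*$ at a point $x_0' \in \dom(\p J^*)$ forces $x_0' \in \interior \dom(J^*)$. Granting this, local boundedness of $MJ = \p J^*$ on all of $D$ yields $D \subseteq \interior \dom(J^*)$, and since always $\interior \dom(J^*) \subseteq D$ we obtain $D = \interior \dom(J^*)$, which is open.

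The main obstacle is precisely this converse local boundedness statement. I would establish it by first using \cite{Subgradients} to show that a bound $M$ on $\p J^*$ over a ball of radius $\delta$ about $x_0'$ makes $J^*$ bounded above by $J^*(x_0') + M\delta$ on the intersection of that ball with $\dom(J^*)$, so that $\dom(J^*)$ is relatively closed in the ball; were $x_0'$ not interior to $\dom(J^*)$, a supporting hyperplane to the convex set $\dom(J^*)$ at a nearby support point lying in $\dom(\p J^*)$ (available by a Bishop--Phelps density argument together with \cite{Subgradients}) would yield a nonzero outward normal that may be added to any subgradient, rendering $\p J^*$ unbounded there and contradicting local boundedness. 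Alternatively, one may cite this converse directly from the theory of maximal monotone operators. Assembling the two directions then completes the proof.
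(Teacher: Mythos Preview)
Your proposal is correct and follows essentially the same route as the paper. Both arguments reduce, via Proposition~\ref{prop. in reflexive B-space} and the identification $MJ = \p J^*$, to the equivalence ``$\dom(\p J^*)$ open $\Longleftrightarrow$ $\p J^*$ locally bounded on its domain''; the paper dispatches the two directions by citing the local boundedness of a maximal monotone operator on the interior of its domain and the converse from \cite{Convex Differentiability}, while you obtain the forward direction through the local Lipschitz continuity of $J^*$ on $\interior\dom(J^*)$ and for the converse sketch a Bishop--Phelps/normal-cone argument before noting that the same fact can simply be cited---which is precisely what the paper does.
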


\begin{proof}
	$\implies$: Proposition \ref{prop. in reflexive B-space} implies that $J$ is essentially strictly convex. To prove that $J$ also is essentially strictly convex in the sense of \cite{Essential in B-Spaces, Adequate Functions}, it suffices to observe that $MJ = \p J^*$ by the Fenchel-Young identity. Now the statement follows as the maximal monotone operator $\p J^*$ is locally bounded on its open domain.
	
	$\impliedby$: $J$ being essentially strictly convex, Proposition \ref{prop. in reflexive B-space} yields (\ref{eq. fake adequacy}). Moreover, we have $MJ = \p J^*$. Since $\dom \left( J^* \right)$ is convex, the closure of $\dom \left( \p J^* \right)$ is convex by the the Br{\o}ndsted-Rockafellar Theorem \cite[Thm. 2]{Subgradients} so that by \cite[Remarks on Ch. 2]{Convex Differentiability} each point where $\p J^*$ is locally bounded belongs to $\interior \dom \left( \p J^* \right)$. Since we assume $\p J^*$ to be locally bounded, it follows that $\dom \left( \p J^* \right)$ is open. Since $J$ is proper, so is $J^*$ and hence $\dom \left( \p J^* \right)$ is non-empty.
\end{proof}

\end{document}